\documentclass[a4paper]{amsart}
\usepackage{amsmath, amsthm, amsfonts, mathtools, tikz-cd, amssymb}
\usepackage{enumitem}
\usepackage{hyperref}

\title[Contractible Rips complexes of groups via metric gluings]{Contractible Rips complexes of groups \\ via metric gluings}
\date{August 25, 2026}

\subjclass[2020]{20F65, 20F36}
\keywords{Rips complexes, graphs of groups, right-angled Artin groups}

\author[K.~Li]{Kevin Li}
\address{Institut f\"ur Mathematik, Freie Universit\"at Berlin, 14195 Berlin, Germany}
\email{kevin.li@fu-berlin.de}

\author[L.J.~S\'anchez Salda\~na]{Luis Jorge S\'anchez Salda\~na}
\address{Departamento de Matem\'aticas, Facultad de Ciencias, Universidad Nacional Aut\'onoma de M\'exico, 04510 Ciudad de M\'exico, Mexico}
\email{luisjorge@ciencias.unam.mx}

\theoremstyle{definition}
\newtheorem{defn}{Definition}[section]
\newtheorem{ex}[defn]{Example}
\theoremstyle{plain}
\newtheorem{lem}[defn]{Lemma}
\newtheorem{prop}[defn]{Proposition}
\newtheorem{thm}[defn]{Theorem}
\newtheorem{cor}[defn]{Corollary}

\newcommand{\enum}{\rm{(\roman*)}}

\newcommand{\IN}{\ensuremath{\mathbb{N}}}
\newcommand{\IZ}{\ensuremath{\mathbb{Z}}}
\newcommand{\IR}{\ensuremath{\mathbb{R}}}

\newcommand{\sfF}{\ensuremath{\mathsf{F}}}
\newcommand{\sfR}{\ensuremath{\mathsf{R}}}

\DeclareMathOperator{\diam}{diam}
\DeclareMathOperator{\CAT}{CAT(0)}

\begin{document}

\begin{abstract}
	We say that a finitely generated group equipped with a finite generating set is of type~$\sfR$ if the Rips complex is contractible for sufficiently large scales.
	We show that the class of groups of type~$\sfR$ is closed under taking graphs of groups with finite edge groups and, in particular, under taking free products.
	We prove that all two-dimensional right-angled Artin groups with the standard generating set are of type~$\sfR$.
	Furthermore, we observe that products of finitely generated free abelian groups and finite groups are of type~$\sfR$.
\end{abstract}

\maketitle

\section{Introduction}
Rips complexes (also called Vietoris--Rips complexes) are an important tool in both applied topology and geometric group theory.
We use gluing techniques that arose in the former area to approach questions in the latter.

Let~$X$ be a metric space and let~$r\in \IR_{\ge 0}$.
The \emph{Rips complex~$R_r(X)$ of~$X$ at scale~$r$} is the simplicial complex of finite subsets of~$X$ with diameter~$\le r$.
Let~$G$ be a finitely generated group and let~$S$ be a finite generating set of~$G$.
We equip~$G$ with the word metric~$d_S$ associated to~$S$, for~$g,h\in G$ with~$g\neq h$ given by
\[
	d_S(g,h)\coloneqq \min\bigl\{k\in \IN\mid g^{-1}h=s_1\cdots s_k \text{ for some } s_1,\ldots,s_k\in S\cup S^{-1}\bigr\}.
\]
We denote the Rips complex of~$(G,d_S)$ by~$R_r(G,S)$.

\begin{defn}
\label{defn:type_R}
	A \emph{marked group} is a pair~$(G,S)$, where~$G$ is a finitely generated group and~$S$ is a finite generating set of~$G$.
	We say that a marked group~$(G,S)$ is \emph{of type~$\sfR$} if there exists~$r_0\in \IR_{\ge 0}$ such that for all~$r\in \IR$ with~$r\ge r_0$, the Rips complex~$R_r(G,S)$ is contractible.
\end{defn}

A priori, being of type~$\sfR$ may depend on the finite generating set.
However, we are not aware of a group~$G$ admitting two finite generating sets~$S_1$ and~$S_2$ such that~$(G,S_1)$ is of type~$\sfR$ and~$(G,S_2)$ is not of type~$\sfR$.
Finite groups with any generating set have finite diameter and are hence of type~$\sfR$.
It is a classical result of Rips that hyperbolic groups with any finite generating set are of type~$\sfR$~\cite[Proposition~III.$\Gamma$.3.23]{Bridson-Haefliger}. 
Zaremsky~\cite{Zaremsky22} asked whether finitely generated free abelian groups with the standard generating set are of type~$\sfR$.
This was proved recently by Virk~\cite{Virk}.

\begin{thm}[{\cite[Theorem~5.1]{Virk}}]
	Let~$n\in \IN$ and let~$\IZ^n$ be equipped with the standard generating set~$S$.
	Then~$(\IZ^n,S)$ is of type~$\sfR$.
\end{thm}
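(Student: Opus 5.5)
Our plan is to show that for every $r\ge r_0$ (with $r_0$ depending only on $n$, say $r_0=2n$ or a similar explicit bound) the Rips complex $R_r(\IZ^n,S)$ of $\IZ^n$ with the $\ell^1$-metric is contractible. We do this by exhausting it by Rips complexes of finite boxes and contracting each of these. Since $R_r(\IZ^n,S)$ is the increasing union of the full subcomplexes $R_r(B_N)$, where $B_N=\{-N,\dots,N\}^n$ carries the restricted metric (these are full subcomplexes because the metric is restricted, not intrinsic), and since homotopy groups commute with such unions of CW complexes, it suffices to show that each $R_r(B_N)$ is contractible. Whitehead's theorem then finishes the argument.

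For contractibility of $R_r(B_N)$ we will use induction by elementary collapses, or more robustly by removing vertices whose links are cones. If $v$ is a vertex of a flag complex $K$ and its link $\mathrm{lk}(v)$ is a cone with apex $w$, that is, $w$ is adjacent to every neighbour of $v$ and to $v$ itself, then $K$ deformation retracts onto $K\setminus v$. This holds because $v$ is dominated by $w$: the closed neighbourhood of $v$ is contained in that of $w$. So the plan is to order the points of $B_N$ and peel them off one at a time, each time finding a dominating point in the remaining set. We first shrink the box coordinate by coordinate to a single point. Concretely, we remove an extreme point $v$ of the current region, e.g.\ a point maximizing the last coordinate and then lexicographically in the other coordinates, and we try $w=v-e_n$ (or a slightly more interior point) as the dominating vertex. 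To use a clean shape, we take the current region to be a ``staircase'' (lexicographic initial segment) of the box, so that the region is always of a controlled form.

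The main obstacle is that the naive choice $w=v-e_n$ does not dominate in $\ell^1$. If $u$ satisfies $d(u,v)\le r$ and $u$ has last coordinate $\ge v_n$, then $d(u,w)=d(u,v)+1$, which may exceed $r$. Because $v$ maximizes the last coordinate among the remaining points, such $u$ must have $u_n=v_n$ and lie lexicographically before $v$. So we must choose $w$ more cleverly, moving $w$ toward those $u$ in the other coordinates as well, e.g.\ $w=v-e_n+(\text{unit step in direction of earlier points})$. This may also fail, since the $u$'s can spread in several directions. The fallback that we expect to actually work is to replace single-vertex domination by a ``central point'' argument: show that for each removed $v$, the set $\mathrm{lk}(v)$ in the remaining complex is contractible, by showing that $\mathrm{lk}(v)$ is itself a Rips-type complex of a smaller ``half-ball'' set. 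The latter is contractible by an inner induction on dimension $n$, using the same peeling applied to $\ell^1$-balls intersected with half-spaces. The largeness of $r$ relative to $n$ is used exactly here, to guarantee that points near $v$ in the half-ball are close enough to a common cone point after discarding boundary irregularities. Organizing this nested induction so that the class of subsets stays closed under taking such links is the crux; we would first verify the cases $n=1,2$ by hand to find the right class (lex-initial segments of boxes intersected with $\ell^1$-balls).
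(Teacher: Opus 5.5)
Your reductions are correct, but there is a genuine gap at the one step that carries the theorem. Exhausting $R_r(\IZ^n,S)$ by the full subcomplexes $R_r(B_N)$, and deleting a vertex whose link is a cone (or merely contractible), are standard moves that use nothing about $\IZ^n$. Everything rests on the claim that, when the lex-last point $v$ of the current region $Y$ is deleted, the link $R_r(N_Y(v))$ is contractible, where $N_Y(v)=\{u\in Y: 0<d(u,v)\le r\}$. This claim is not proved.

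For $n\le 2$ your domination idea does work. Take $w=v-e_1-e_2$ (or $v-e_1$, $v-e_2$ at the edge of the box); then $d(u,w)\le d(u,v)$ for all $u\in Y\setminus\{v\}$, so $r\ge 2$ suffices.

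For $n\ge 3$ no cleverer choice of $w$ can rescue single-vertex domination, already at the first step. Suppose $w=v+\delta$ dominates the corner $v=(N,\dots,N)$ in $R_r(B_N)$, with $r\le 2N$ an integer. Then $d(w,v-re_i)\le r$ gives $\sum_{j\ne i}|\delta_j|+\delta_i\le 0$ for every $i$. Summing over $i$ gives $(n-2)\sum_j|\delta_j|\le 0$, so $w=v$.

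The same happens at lex-last points deep inside $B_N$. If $x_n$ is compared first and $x_1$ last, the points $v-re_n$ and $v-re_1$ of $Y$ force $w=v-t(e_1+e_n)$ with $t\ge 1$. Then $d(w,v+(r-1)e_1-e_2)=r+2t>r$. So the links you must handle are genuinely not cones, and the whole proof rests on your fallback.

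That fallback is not yet an argument. $N_Y(v)$ is a full-dimensional, non-convex piece of $\IZ^n$: roughly half of an $\ell^1$-ball of radius $r$, with diameter up to $2r$. Its Rips complex is again taken at scale $r$, so nothing lowers $n$, and there is no visible ``inner induction on dimension''. To make it work you would need all of the following, none of which the sketch supplies:
\begin{enumerate}[label=\enum]
\item a class of finite subsets of $\IZ^n$ closed both under deleting the next point and under passing to such punctured neighbourhoods;
\item an explicit deletion order inside each member of that class;
\item a proof that each deletion has contractible link;
\item a quantitative point where $r\gg n$ is used (nothing in the sketch supports $r_0=2n$).
\end{enumerate}
You call this the crux; it is in fact the entire theorem.

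The paper itself only cites Virk for this statement. The argument it does record for $\IZ^n$ is Zaremsky's criterion (Theorems~\ref{thm:Morse} and~\ref{thm:Morse_Zn}), a Morse-theoretic argument across scales that needs no deletion order. For each $U$ of diameter $r$ one takes a lattice point $z_0\in Z(U,\beta_r)$ close to the convex hull of $U$ in $\IR^n$, with $d(z_0,z)\le\beta+n/2$ for all $z\in Z(U,\beta)$, where $\beta_r=rn/(n+1)+n/2$. The largeness of $r$ enters only through $\beta_r+n/2\le r$, i.e.\ $r\ge n^2+n$. This trades the combinatorics of collapsing orders for a single convex-geometric estimate.
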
 

It is the subject of ongoing research to establish the optimal (i.e., lowest) threshold~$r_0$ such that~$R_r(\IZ^n,S)$ is contractible for all~$r\ge r_0$.
Conjecturally, $R_r(\IZ^n,S)$ is contractible for all~$r\ge n$~\cite{Virk,Zaremsky26}.
At the time of writing, this is know to be true only for~$n\le 5$~\cite{Virk,GSS, Wang-Zhou}.
We refer to~\cite{GSS, Wang-Zhou} for the latest account on these quantitative aspects.
We focus more on the qualitative property type~$\sfR$.

Our motivation to study the property type~$\sfR$ comes from viewing it as a strong finiteness property of groups.
A group~$G$ is \emph{of type~$\sfF_\infty$} if there exists a CW-model for the classifying space~$BG$ that is of finite type. If there exists a contractible cocompact $G$-CW-complex with finite cell stabilisers, then~$G$ is of type~$\sfF_\infty$.
The barycentric subdivision of~$R_r(G,S)$ is a cocompact $G$-CW-complex with finite cell stabilisers.
Hence, if~$(G,S)$ is of type~$\sfR$, then~$G$ is of type~$\sfF_\infty$.
Conversely, it is not known if every group~$G$ of type~$\sfF_\infty$ that admits a finite-dimensional model for~$BG$ has a finite generating set~$S$ such that~$(G,S)$ is of type~$\sfR$~\cite[Question~4.6]{Zaremsky_problems}.

We prove several results for groups of type~$\sfR$ that are well-known for type~$\sfF_\infty$, thus providing more evidence for a strong connection between the two properties.
Our main tool is a gluing criterion for Rips complexes of metric gluings (Theorem~\ref{thm:gluing}) by Adamaszek et al.~\cite{Adamaszek_etal}.

\subsection*{Graphs of groups with finite edge groups.}

We prove combination theorems for amalgamated products and HNN-extensions over finite groups.

\begin{thm}
\label{thm:amalg}
	Let~$A\ast_C B$ be an amalgamated product, where~$A$ and~$B$ are finitely generated groups and~$C$ is a finite subgroup of both~$A$ and~$B$.
	Let~$S_C$ be the set of all non-trivial elements of~$C$.
	Let~$S_A$ and~$S_B$ be finite generating sets of~$A$ and~$B$, respectively, that contain~$S_C$.
	If the marked groups~$(A,S_A)$ and~$(B,S_B)$ are of type~$\sfR$, then the amalgamated product~$(A\ast_C B,S_A\cup S_B)$ is of type~$\sfR$.
\end{thm}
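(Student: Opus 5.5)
We use the gluing criterion of Adamaszek et al.\ (Theorem~\ref{thm:gluing}). It says, roughly: if a metric space $X$ is a metric gluing of $Y$ and $Z$ along a common subspace $W=Y\cap Z$, and $W$ has diameter at most $r$, then $R_r(X)$ is homotopy equivalent to the union of $R_r(Y)$ and $R_r(Z)$ glued along $R_r(W)$. Under these hypotheses $R_r(W)$ is a full simplex and hence contractible. The plan is to exhibit the Cayley graph metric on $G=A\ast_C B$ with $S=S_A\cup S_B$ as an iterated tree of such gluings, modelled on the Bass--Serre tree~$T$.

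\textbf{Geometry.} Each vertex of $T$ corresponds to a coset $gA$ or $gB$, and each edge to a coset $gC$. Because $S_C\subseteq S_A\cap S_B$ consists of all nontrivial elements of $C$, every coset $gC$ has diameter $\le 1$ in $d_S$. The first step is to check three facts.
\begin{enumerate}[label=\enum]
\item The inclusion $(A,d_{S_A})\to (G,d_S)$ is an isometric embedding, and likewise for $B$ and for all translates $gA$, $gB$.
\item For any subtree $T'\subseteq T$, the union $X_{T'}$ of the corresponding cosets is isometrically embedded in $G$.
\item If $T'=T_1\cup T_2$ with $T_1\cap T_2$ a single edge $gC$, then $X_{T'}$ is the metric gluing of $X_{T_1}$ and $X_{T_2}$ along $gC$. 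That is, $d(y,z)=\min_{c\in gC}\bigl(d(y,c)+d(c,z)\bigr)$ for $y\in X_{T_1}$, $z\in X_{T_2}$.
\end{enumerate}
These follow from normal forms. A geodesic word in $S$ between two points projects to a path in $T$. Any word in $S_A\cup S_B$ representing an element of $A$ can be shortened, by collapsing the $B$-syllables that lie in $C$, to a word in $S_A$ of no greater length; here we use $S_C\subseteq S_A$. Similarly, any path from $X_{T_1}$ to $X_{T_2}$ must pass through the separating coset $gC$. We expect these verifications to be routine but somewhat fiddly.

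\textbf{Induction.} Fix $r\ge \max(r_A,r_B,1)$, where $r_A$ and $r_B$ are thresholds witnessing type~$\sfR$ for $(A,S_A)$ and $(B,S_B)$. Note that $R_r(X_{T'})$ is the full subcomplex of $R_r(G,S)$ on $X_{T'}$. We show by induction on the number of vertices that $R_r(X_{T'})$ is contractible for every finite subtree $T'$.
\begin{enumerate}[label=\enum]
\item Base case: $T'$ is a single vertex. Then $R_r(X_{T'})\cong R_r(A,S_A)$ or $R_r(B,S_B)$ by isometric embedding, which is contractible.
\item Inductive step: remove a leaf $v$ of $T'$, with leaf edge $gC$. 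By Theorem~\ref{thm:gluing}, $R_r(X_{T'})$ is homotopy equivalent to $R_r(X_{T'\setminus v})\cup R_r(X_v)$ glued along the full simplex on $gC$. Two contractible complexes glued along a contractible subcomplex give a contractible complex.
\end{enumerate}

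\textbf{Conclusion and main obstacle.} Every finite simplicial subcomplex of $R_r(G,S)$ lies in $R_r(X_{T'})$ for some finite subtree $T'$, since its vertices lie in finitely many cosets and we may take their convex hull in $T$. Hence all homotopy groups vanish, and Whitehead's theorem gives contractibility. The main obstacle is matching exactly the hypotheses of Theorem~\ref{thm:gluing}. Whatever its precise form (gluing along a subspace of diameter $\le r$, or a condition such as ``$R_r$ of the gluing is the union'' requiring that no simplex of diameter $\le r$ straddles the two sides), we must verify that a finite set of diameter $\le r$ meeting both $X_{T'\setminus v}\setminus gC$ and $X_v\setminus gC$ does not obstruct the homotopy equivalence. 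We would first try to verify the theorem's stated hypothesis directly from the metric-gluing formula in fact~(iii), using $\diam(gC)\le 1\le r$.
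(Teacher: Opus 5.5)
Your architecture matches the paper's. You view $(A\ast_C B,S_A\cup S_B)$ as a successive metric gluing of copies of $(A,S_A)$ and $(B,S_B)$ along the cosets $gC$; the paper cites Cannon for your facts (i)--(iii). You then induct by removing leaves using the gluing theorem, and finish by exhaustion (Lemma~\ref{lem:exhaustion}). This is exactly Proposition~\ref{prop:succ_gluing_star}.

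\textbf{The gap.} What is missing is the step you flag as the main obstacle and leave open: you never verify the hypothesis of Theorem~\ref{thm:gluing}. Moreover, the version you paraphrase (``the gluing locus has diameter at most $r$'') is false for general metric gluings. A bound $\diam(gC)\le r$ only shows that the set $\sigma$ of all $c\in gC$ with $\diam(U_X\cup\{c\}\cup U_Y)\le r$ is the unique maximal candidate. The real content of the hypothesis is that $\sigma\neq\emptyset$, and this can fail. For example, take $r=1$ and $A=\{a_1,a_2\}$ with $d(a_1,a_2)=1$. Let $X=A\cup\{x_1,x_2\}$ with $d(x_i,a_i)=0.1$, $d(x_i,a_j)=1.1$ for $i\neq j$, and $d(x_1,x_2)=1$. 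Let $Y=A\cup\{y\}$ with $d(y,a_i)=0.5$. Then:
\begin{itemize}
\item $\{x_1,x_2,y\}$ has diameter $1$, but no $a_i$ can be added to it;
\item $R_1(X\cup_A Y)$ is a cone with apex $y$;
\item $R_1(X)\cup_{R_1(A)}R_1(Y)\simeq S^1$.
\end{itemize}

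\textbf{The fix.} In your setting the missing verification is short; it is the argument of Corollary~\ref{cor:gluing_star}. Given $x\in U_X$, pick any $y\in U_Y$. Your fact~(iii) gives $c\in gC$ with $d(x,y)=d(x,c)+d(c,y)$. Since $y\notin gC$ and the word metric is integer-valued, $d(c,y)\ge 1$, so $d(x,c)\le r-1$. Because $\diam(gC)\le 1$, it follows that $d(x,c')\le r$ for every $c'\in gC$. The symmetric argument handles points of $U_Y$. Hence $\sigma=gC$, which is non-empty and finite, and Theorem~\ref{thm:gluing} applies at each leaf-removal step.

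With this inserted, your proof is complete and agrees with the paper's. Your threshold $r\ge\max(r_A,r_B,1)$ is legitimate here because $gC$ is a simplex. Indeed, for $1\le r<2$ the hypothesis is vacuous, since any geodesic from $X\setminus gC$ to $Y\setminus gC$ through $gC$ has length at least $2$. The paper uses $2$ only because Corollary~\ref{cor:gluing_star} is stated for radius-one loci, such as the length-$2$ paths in Proposition~\ref{prop:CAT}.
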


In the special case when~$C$ is the trivial group, Theorem~\ref{thm:amalg} reduces to free products (Corollary~\ref{cor:free_prod}).

\begin{thm}
\label{thm:HNN}
	Let~$A\ast_{C,\varphi}$ be an HNN-extension, where~$A$ is a finitely generated group, $C$ is a finite subgroup of~$A$, and~$\varphi\colon C\to A$ is a monomorphism.
	Let~$S_C$ be the set of all non-trivial elements of~$C$.
	Let~$S_A$ be a finite generating set of~$A$ containing~$S_C$ and~$\varphi(S_C)$.
	If the marked group~$(A,S_A)$ is of type~$\sfR$, then the HNN-extension~$(A\ast_{C,\varphi},S_A\cup\{t\})$ is of type~$\sfR$, where~$t$ is the stable letter.
\end{thm}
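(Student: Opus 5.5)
We prove Theorem~\ref{thm:HNN} by the same strategy as Theorem~\ref{thm:amalg}. We realise the Cayley graph of $G\coloneqq A\ast_{C,\varphi}$ with respect to $S\coloneqq S_A\cup\{t\}$ as an iterated metric gluing of copies of $(A,d_{S_A})$ along copies of the finite set $C$, and then apply the gluing criterion, Theorem~\ref{thm:gluing}, repeatedly.

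The structure comes from Bass--Serre theory. $G$ acts on its Bass--Serre tree $T$, whose vertices are the cosets $gA$ and whose edges are the cosets $gC$, joining $gA$ to $gtA$ (with the convention $t^{-1}ct=\varphi(c)$). Each coset $gA$ with the restricted metric $d_S$ is isometric to $(A,d_{S_A})$; this needs a retraction-type argument. A word in $S$ between two elements of $gA$ that leaves $gA$ must travel into a subtree of $T$ and return through the same edge. The Britton-reduced subword leaving and re-entering through that edge then equals an element of $C$ or of $\varphi(C)$. Since $S_C\subseteq S_A$ and $\varphi(S_C)\subseteq S_A$, this element has $S_A$-length at most $1$, while the excursion used at least two letters $t^{\pm1}$. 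So shortcuts never help, and $gA$ is isometrically embedded.

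Two adjacent cosets $gA$ and $gtA$ are connected by the edges labelled $t$ from $gc$ to $gct=gt\varphi(c)$, for $c\in C$. Thus $G$ with $d_S$ is the metric space obtained by taking the disjoint union of the cosets and, for each edge of $T$, adding these $|C|$ length-one edges between the subsets $gC\subseteq gA$ and $gt\varphi(C)\subseteq gtA$. Both subsets have diameter at most $1$. To fit Theorem~\ref{thm:gluing}, which glues along common subspaces, I would enlarge each coset $gA$ to the set $X_{gA}$ consisting of $gA$ together with its $t^{\pm1}$-neighbours. Then $X_{gA}\cap X_{gtA}=gC\cup gct$-type sets (up to the appropriate translates), which is finite of diameter at most $2$. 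I would also check that $R_r(X_{gA})\simeq R_r(A)$ for large $r$: the added points are attached to a set of diameter at most $1$, so each is dominated (it is a cone point relative to a full simplex), and removing dominated vertices preserves homotopy type.

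Now build $G$ by exhausting $T$ with finite subtrees $T_1\subseteq T_2\subseteq\cdots$, each obtained from the previous one by adding a leaf. At each step the space $Y_{n+1}$ is the union of $Y_n$ and a new $X_{gA}$, glued along a set of diameter at most $2$. The distance formula through the gluing holds by the tree structure. Theorem~\ref{thm:gluing} then gives $R_r(Y_{n+1})\simeq R_r(Y_n)\vee R_r(X_{gA})$, up to the contractible Rips complex of the glued set, for $r\ge$ some constant depending on the diameter of the glued set. By induction, for $r\ge \max(r_0,\text{const})$, every $R_r(Y_n)$ is contractible. Since $R_r(G,S)=\bigcup_n R_r(Y_n)$ is an increasing union of subcomplexes (each simplex lies in some $Y_n$), and each $Y_n$ is isometrically embedded, Whitehead's theorem applied to the colimit shows that $R_r(G,S)$ is contractible.

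The main obstacle is verifying the hypotheses of Theorem~\ref{thm:gluing} precisely. These are the isometric embedding of each piece, the metric-gluing formula $d(x,y)=\min_{z}(d(x,z)+d(z,y))$ over the glued set, and the size condition relating $r$ to the diameter of the intersection. I expect to handle these with the Britton's-lemma argument above, carried out carefully, choosing the enlarged pieces so that the intersections are exactly where geodesics must pass.
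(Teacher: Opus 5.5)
Your route differs from the paper's, and the difference matters at exactly the step you leave open, so as written there is a gap. With your enlarged pieces, $X_{gA}\cap X_{gtA}$ is the whole tube $L=gC\cup gCt=gC\cup gt\varphi(C)$, not a copy of $C$. For $|C|\ge 2$ this set is not star-shaped. If $c\neq c'$, the element $c^{-1}c't$ is not in $S\cup S^{-1}$, so $d_S(gc,gc't)=2$; hence no point of $L$ is within distance $1$ of all of $L$, and Corollary~\ref{cor:gluing_star} does not apply.

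You then assert that Theorem~\ref{thm:gluing} holds once $r$ exceeds a constant depending on $\diam(L)$. Unique maximality of $\sigma$ is indeed automatic for $r\ge\diam(L)$. The real content, however, is that $\sigma$ is \emph{non-empty}, and that is not a size condition. The general estimate only gives $d(x,z)\le (r-1)+\diam(L)=r+1$ for $x\in U_X$ and $z\in L$, and this is sharp at every scale. For example, glue two graphs along a $4$-cycle $a_1a_2a_3a_4$. On one side add a vertex $y$ adjacent to all $a_i$. On the other side add paths of length $r-1$ from $a_i$ to new vertices $x_i$, plus a hub vertex adjacent to all $x_i$. Then $\{x_1,\ldots,x_4,y\}$ has diameter $r$, but $d(x_{i+2},a_i)=r+1$ (indices mod $4$), so $\sigma=\emptyset$ for every integer $r\ge 2$. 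The mechanism you propose therefore cannot work, and the ``main obstacle'' you flag is genuinely unresolved.

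In your specific situation the step can be repaired by using more than the diameter of $L$. Every path from $x\in Y_n\setminus L$ to $y\in X_{gtA}\setminus L$ must cross the tree edge between $gA$ and $gtA$. That is, it traverses some Cayley edge $gc\to gct$, both of whose endpoints lie in $L$. Hence $d(x,y)\ge d(x,L)+1+d(y,L)\ge d(x,L)+2$, so $d(x,L)\le r-2$ and every point of $L$ is within $r$ of $x$. The same holds symmetrically for $y$, so $\sigma=L$ works for all $r\ge 2$. Your domination step is also true for $r\ge 2$, but it needs an actual argument rather than the ``cone point'' remark. Show that every $u\in X_A$ with $d(u,at)\le r$ satisfies $d(u,a)\le r$, by the same edge-crossing argument together with $S_C,\varphi(S_C)\subseteq S_A$. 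With both fixes your proof goes through.

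The paper avoids both issues. It does not enlarge the cosets. Instead it takes the tube $C\times\IZ/2\IZ\langle t\rangle$ itself as a separate piece; this piece has diameter $\le 2$, so its Rips complex is a simplex for $r\ge 2$. It then writes $G$ as a successive gluing of copies of $(A,S_A)$ and of the tube along copies of $C$. Each copy of $C$ is a simplex, so Corollary~\ref{cor:gluing_star} and Proposition~\ref{prop:succ_gluing_star} apply verbatim with $r_0=\max\{r_A,2\}$, and no domination argument is needed.
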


Together, Theorem~\ref{thm:amalg} and Theorem~\ref{thm:HNN} show that the fundamental group of a finite graph of groups~\cite{Serre} with vertex groups of type~$\sfR$ and finite edge groups equipped with a suitable finite generating set is of type~$\sfR$.

The proof of Theorem~\ref{thm:amalg} (and similarly of Theorem~\ref{thm:HNN}) relies on decomposing the Cayley graph of the amalgamated product as a successive gluing of copies of the Cayley graphs of~$(A,S_A)$ and~$(B,S_B)$ along copies of the Cayley graph of~$(C,S_C)$.
We need the group~$C$ to be finite because the gluing criterion for Rips complexes (Corollary~\ref{cor:gluing_star}) requires a gluing locus of small diameter.
We do not know if in Theorem~\ref{thm:amalg} and Theorem~\ref{thm:HNN} it is enough to assume that the edge group~$(C,S_C)$ is of type~$\sfR$.

\subsection*{Two-dimensional right-angled Artin groups}
As mentioned above, finitely generated free groups and finitely generated free abelian groups with the standard generating sets are of type~$\sfR$.
A well-studied class of groups that interpolates between these two extremes is that of right-angled Artin groups~\cite{Charney_survey,Koberda_survey}.

Let~$\Gamma$ be a finite simplicial graph.
The \emph{right-angled Artin group~$A_\Gamma$} has as generators the vertices~$\Gamma^{(0)}$ of~$\Gamma$ subject to the relations that two generators commute if the corresponding vertices span an edge in~$\Gamma$.
It is a natural question if all right-angled Artin groups~$(A_\Gamma,\Gamma^{(0)})$ are of type~$\sfR$~\cite[Question~4.5]{Zaremsky_problems}.
We prove that this is true for all two-dimensional right-angled Artin groups (i.e., those that admit a CW-model for~$B(A_\Gamma)$ of dimension~$\le 2$).

\begin{thm}
\label{thm:RAAG}
	Let~$\Gamma$ be a finite simplicial graph without triangles.
	Then the right-angled Artin group~$(A_\Gamma,\Gamma^{(0)})$ is of type~$\sfR$.
	More precisely, for every~$r\ge 2$, the Rips complex~$R_r(A_\Gamma,\Gamma^{(0)})$ is contractible.
\end{thm}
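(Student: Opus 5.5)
Our plan is to show that $R_r(A_\Gamma,\Gamma^{(0)})$ is contractible for every $r\ge 2$ by induction on the number of vertices of $\Gamma$. The induction will use the metric gluing criterion (Theorem~\ref{thm:gluing} and Corollary~\ref{cor:gluing_star}), applied to a decomposition of the Cayley graph of $A_\Gamma$.

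\textbf{Reductions.} If $\Gamma$ is disconnected, say $\Gamma=\Gamma_1\sqcup\Gamma_2$, then $A_\Gamma=A_{\Gamma_1}\ast A_{\Gamma_2}$ with the union of the standard generating sets. We then argue as for Corollary~\ref{cor:free_prod}, keeping track of the scale: we expect that the gluing along single vertices used there works for every $r\ge 2$ once it works for the factors. So we may assume $\Gamma$ is connected.

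If $\Gamma$ has at most one vertex, the group is trivial or $\IZ$ with $S=\{a\}$. This is hyperbolic, and its Rips complex at scale $r\ge 1$ is contractible directly.

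If $\Gamma$ is a star $K_{1,n}$ (triangle-free, so the leaves are pairwise non-adjacent), then $A_\Gamma\cong \IZ\times F_n$.

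\textbf{Inductive step.} In general, pick a vertex $v$ and write $A_\Gamma$ as an amalgam
\[
A_\Gamma = A_{\Gamma\setminus v}\ast_{A_{\mathrm{lk}(v)}} A_{\mathrm{st}(v)}.
\]
Since $\Gamma$ has no triangles, $\mathrm{lk}(v)$ is a discrete set of vertices. Hence $A_{\mathrm{lk}(v)}$ is free and $A_{\mathrm{st}(v)}\cong\IZ\times F_k$.

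The edge group is infinite, so Corollary~\ref{cor:gluing_star} cannot be applied to the amalgam directly. Instead we work with the Cayley graph $X$ of $A_\Gamma$ directly, which is the $1$-skeleton of the universal cover of the Salvetti complex, a $\CAT$ square complex. The idea is to build $X$ as an increasing union of finite subgraphs. Each new subgraph is obtained from the previous one by gluing along a subset of diameter at most $2$, so that the Rips complex at scale $r\ge 2$ of the glued set collapses onto the Rips complex of the previous one.

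For this we use a convexity or ordering argument in the $\CAT$ cube complex. Order the vertices of $X$ by a \emph{shelling}, e.g.\ by distance from the identity together with a tie-break. We then check that for each new vertex $g$, the set of its already-present neighbours within distance $r$ is a cone in $R_r$. Because the complex is two-dimensional, the link of $g$ towards the identity consists of at most one square corner, i.e.\ two predecessors at distance $1$ from each other through $g$, or a single edge. So there is a single vertex, namely the ``median'' predecessor obtained by going one step toward the identity, which is within distance $r$ of everything in $\mathrm{lk}_{R_r}(g)\cap(\text{earlier vertices})$. This makes the link a cone, and adding $g$ is a homotopy equivalence.

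Since Rips complexes commute with directed unions of finite subsets, the whole complex is then contractible: each stage is homotopy equivalent to the point $\{e\}$, and homotopy groups commute with the colimit, so Whitehead's theorem finishes.

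\textbf{Main obstacle.} The hard step is verifying the cone property for the earlier neighbours of $g$ within distance $r$, for all $r\ge2$, not just $r=2$. For large $r$ the earlier ball around $g$ is big.

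Our first attempt would be to construct a ``projection toward the identity'' map $\pi$ on $X$ with three properties:
\begin{itemize}
\item $d(\pi(x),\pi(y))\le d(x,y)$;
\item $d(x,\pi(x))\le 1$;
\item $\pi$ strictly decreases a suitable complexity.
\end{itemize}
Such a map would give a cone point $\pi(g)$ for the relevant link. This is the Rips-style argument used for hyperbolic groups in \cite[Proposition~III.$\Gamma$.3.23]{Bridson-Haefliger}, with quasi-geodesic combing replaced by the normal-form combing of the RAAG.

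Triangle-freeness should be exactly what makes such a $1$-Lipschitz combing exist. In higher dimensions, commuting triples create cubes, where multiple predecessor directions fail to have a common cone point at small scale, mirroring the difficulty for $\IZ^n$ with $n\ge 3$.

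If a global combing fails, we fall back to Virk's method for $\IZ^2$ \cite{Virk}, applied locally inside each flat $\IZ\times F_k$ coset of a star. We then combine these pieces along the tree of spaces via Theorem~\ref{thm:gluing}.
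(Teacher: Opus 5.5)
There is a genuine gap, and it sits at the step you flag yourself: the cone property for the link of a newly added vertex $g$ is only asserted. The apex you name, a predecessor one step toward the identity, is wrong when $g$ has two descending edges. In $\IZ^2$ ($\Gamma$ a single edge), take $g=(2,2)$ and $r=2$. The vertices $(2,0)$ and $(0,2)$ precede $g$ under every tie-break and lie at distance $2$ from $g$. But $d\bigl((2,1),(0,2)\bigr)=d\bigl((1,2),(2,0)\bigr)=3$, and the other two neighbours of $g$ come later. So no vertex at distance $\le 1$ from $g$ can be an apex.

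The same example rules out your combing route, since that route forces $d(g,\pi(g))\le 1$. Independently, the two listed properties only give $d(x,\pi(g))\le d(x,\pi(x))+d(\pi(x),\pi(g))\le r+1$, which is one too many. The fallback of gluing Virk-type pieces along the tree of spaces does not go through as stated either. The gluing loci are infinite cosets of $A_{\mathrm{lk}(v)}$, so Corollary~\ref{cor:gluing_star} is unavailable, and you give no verification of the hypothesis of Theorem~\ref{thm:gluing}.

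What is missing is the right apex together with the estimate $d(x,c)\le d(x,g)$ for all earlier $x$. Take $c$ to be the far corner of the cube spanned by the descending edges at $g$:
\begin{itemize}
\item if there is one descending edge, $c$ is the predecessor;
\item if there are two, $c$ is the vertex diagonally opposite $g$ in that square, at distance $2$ (this is where $r\ge 2$ enters).
\end{itemize}
Consider the intersection of the halfspaces containing $g$ bounded by the hyperplanes dual to the descending edges. The vertex $g$ is the gate of $e$ in this intersection. Otherwise a geodesic from $g$ to that gate would begin with a further descending edge, contradicting uniqueness, respectively two-dimensionality.

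Hence, for $x\neq g$ with $d(e,x)\le d(e,g)$, one of these hyperplanes separates $x$ from $g$. If none did, the gate property would give $d(e,x)=d(e,g)+d(g,x)>d(e,g)$. Counting hyperplanes then yields $d(x,c)\le d(x,g)$. With this, your shelling does prove the theorem for any locally finite CAT(0) square complex.

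The paper performs the same local computation without a global order. It exhausts by finite combinatorially convex (hence CAT(0)) subcomplexes and peels off a free vertex $v$ lying in a unique maximal cube $C$. It then notes that $V$ is the metric gluing of $V\setminus\{v\}$ and $C^{(0)}$ along a vertex or a path of length $2$. That locus is star-shaped about its middle vertex, the corner opposite $v$, so Corollary~\ref{cor:gluing_star} applies. Your opening reductions to free products, amalgams over $A_{\mathrm{lk}(v)}$ and the star case are not needed.
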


In the special case when~$\Gamma$ is complete bipartite, the group~$A_\Gamma$ is a direct product of two free groups (Example~\ref{ex:bipartite}).
The proof of Theorem~\ref{thm:RAAG} relies on decomposing a CAT(0) square complex as a successive gluing of squares.
Our method is similar to that of Virk for~$\IZ^2$ via domination~\cite[Theorem~3.2]{Virk}.
In higher dimensions, the gluing criterion for Rips complexes (Theorem~\ref{thm:gluing}) becomes more complicated to verify and the combinatorics of the gluings intractable.

It was proved independently by Hulbert--Zaremsky~\cite{Hulbert-Zaremsky} via Morse theoretic methods that the Rips complex~$R_2(A_\Gamma,\Gamma^{(0)})$ at scale~$2$ is contractible for all two-dimensional right-angled Artin groups~$A_\Gamma$.

We point out that for a different choice of generating set~$S$, it is known that all right-angled Artin groups~$(A_\Gamma,S)$ are of type~$\sfR$~\cite[Lemma~5.20 and Theorem~4.1]{CCGHO25}.
Here~$S$ is the set of all products of generators in~$\Gamma^{(0)}$ corresponding to vertices spanning a complete subgraph in~$\Gamma$.

\subsection*{Products of free abelian groups and finite groups}
It is a natural question when type~$\sfR$ is compatible with products (and, more generally, with extensions).
The example of~$\IZ^n$ shows that this question is delicate in general.
As a first step, one may ask if type~$\sfR$ is inherited under taking products with finite groups.
We observe that this is true for finitely generated free abelian groups, based on a Morse theoretic criterion (Theorem~\ref{thm:Morse}) by Zaremsky~\cite{Zaremsky26}.

\begin{thm}
\label{thm:prod_finite}
	Let~$n\in \IN$ and let~$\IZ^n$ be equipped with the standard generating set~$S$.
	Let~$F$ be a finite group equipped with any generating set~$T$.
	Then the direct product~$(\IZ^n\times F,S\cup T)$ is of type~$\sfR$.
\end{thm}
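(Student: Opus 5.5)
The plan is to run the Morse theoretic argument of Zaremsky (Theorem~\ref{thm:Morse}) directly on~$G\coloneqq \IZ^n\times F$, using the known contractibility of~$R_r(\IZ^n,S)$ as the base of the filtration. The metric I will work with is explicit. Since the generators in~$S$ and in~$T$ commute and generate the two factors separately, the word metric is the $\ell^1$-sum
\[
	d_{S\cup T}\bigl((v,f),(w,g)\bigr)=\|v-w\|_1+d_T(f,g).
\]
Set~$D\coloneqq \diam(F,d_T)<\infty$. The map~$v\mapsto (v,1)$ is an isometric embedding, and its image~$\IZ^n\times\{1\}$ spans a full subcomplex of~$R_r(G)$ isomorphic to~$R_r(\IZ^n,S)$. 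This subcomplex is contractible for~$r\ge r_0$ by Virk's theorem.

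\textbf{Filtration.} I would build~$R_r(G)$ from this base by adding the remaining vertices~$(v,f)$ with~$f\neq 1$. Their height is~$h(v,f)\coloneqq d_T(1,f)$, and ties within a level are broken by a fixed order on~$F$ and then on~$\IZ^n$. The aim is to show that each descending link, i.e.\ the link of~$(v,f)$ in the full subcomplex on the vertices added earlier, is contractible once~$r$ is large, say~$r\ge r_0+2D$ or some similar bound. Contractibility of~$R_r(G)$ then follows.

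\textbf{Naive coning fails.} The natural first try for the descending link is to cone it off from a vertex one step closer to the base, namely~$(v,f')$ with~$f'=fs^{-1}$ for some~$s\in T^{\pm}$ and~$d_T(1,f')=d_T(1,f)-1$. This does not work as stated. A vertex~$(w,g)$ in the link satisfies~$\|v-w\|_1+d_T(f,g)\le r$, but then we only get~$d((v,f'),(w,g))\le r+1$, so the bound is off by one. This is exactly why the $\IZ^n$-direction has to be used as well.

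\textbf{Repairing the cone.} The plan is to exploit the $\IZ^n$-factor. Trade one unit of the $F$-distance for one unit in a $\IZ^n$-direction, and take the cone point to be~$(v\pm e_i,f')$ or~$(v,f')$. The choice depends on which vertices of the link are far from~$v$ in the $\IZ^n$-coordinate. This is the point where Zaremsky's criterion (Theorem~\ref{thm:Morse}) should do the work. As I understand it, that criterion reduces contractibility to finding, for each vertex and each nearby finite set of lower vertices, a lower vertex that is simultaneously close to all of them. For~$\IZ^n$ this kind of statement is essentially the combinatorial content behind Virk's theorem. Here the extra~$F$-coordinate only adds a bounded defect of size at most~$D$, and for large~$r$ this defect can be absorbed by stepping in the $\IZ^n$-direction.

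\textbf{Main obstacle.} I expect the hardest step to be verifying the hypothesis of Theorem~\ref{thm:Morse} uniformly: one has to choose the cone point so that it works for every simplex of the descending link at once, not just vertex by vertex. I would first try to handle the $F$-coordinate by a single greedy step toward~$1$, compensated by moving in a coordinate direction of~$\IZ^n$ that is not extremal among the link vertices. The bounded diameter~$D$ should make the needed slack in~$r$ uniform. If that fails, the fallback is to collapse each fibre~$\{v\}\times F$ at once rather than vertex by vertex. Each fibre has diameter at most~$D\le r$, so it spans a simplex, and the remaining check would be that the link of such a fibre simplex is again controlled by the contractible complex~$R_r(\IZ^n,S)$.
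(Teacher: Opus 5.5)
There is a genuine gap. The step that carries all the content, contractibility of the descending links in your height filtration, is never proved, and the proposed repair by a shifted cone point cannot work. The descending link of $(v,f)$ contains vertices on both sides of $v$ in every coordinate direction, lying over several lower elements of $F$, so no shift $v\pm e_i$ shortens all distances at once.

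Concretely, take $n=1$, $F=\IZ/6\IZ$ with $T=\{1\}$, and any integer $r\ge 1$. The descending link of $(v,3)$ contains $(v\pm(r-1),2)$ and $(v\pm(r-1),4)$. Summing the distance constraints for opposite pairs shows that the only vertex of $\IZ\times F$ within distance $r$ of all four is $(v,3)$ itself. So these links have no cone point at any scale: the off-by-one defect you found persists for every $r$, and taking $r$ large buys no slack in this filtration.

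There are three further problems.
\begin{itemize}
\item Theorem~\ref{thm:Morse} is not a criterion about descending links of a vertex-height filtration over a base. Its hypothesis concerns sets $U$ of diameter exactly $r$ and their centre sets $Z(U,\alpha_r)$, so it cannot be invoked inside your filtration.
\item Virk's theorem used as a black box is too coarse. Transferring it via the projection $p$ and the section $v\mapsto(v,1)$ only shows that $(v,f)\mapsto(v,1)$ is contiguous to the identity in $R_{r+D}$. Hence $R_r(\IZ^n\times F)\to R_{r+D}(\IZ^n\times F)$ is null-homotopic for large $r$, which does not make $R_r$ contractible.
\item The fibre-simplex fallback is not developed far enough to evaluate.
\end{itemize}

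The missing idea is to verify the hypothesis of Theorem~\ref{thm:Morse} for $\IZ^n\times F$ directly, using the quantitative statement of Theorem~\ref{thm:Morse_Zn} rather than mere contractibility for $\IZ^n$.
\begin{itemize}
\item Let $U$ have $\diam(U)=r$ and set $\overline r\coloneqq\diam(p(U))\le r$. Take $z_0$ from Theorem~\ref{thm:Morse_Zn} for $p(U)$, and use $(z_0,e_F)$ with $\alpha_r\coloneqq\beta_r+D$.
\item Then $(z_0,e_F)\in Z(U,\beta_{\overline r}+D)\subset Z(U,\alpha_r)$.
\item For $(g,f)\in Z(U,\alpha_r)$ one has $g\in Z(p(U),\alpha_r)$. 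Applying Theorem~\ref{thm:Morse_Zn} with $\beta=\alpha_r$ (this is why it is needed for every $\beta$) gives $d((z_0,e_F),(g,f))\le\alpha_r+n/2+D$.
\item This is at most $r$ for large $r$, because $r-\beta_r=r/(n+1)-n/2$ grows linearly.
\end{itemize}
That linearly growing slack is what absorbs the constant $2D+n/2$ coming from the finite factor, and nothing in your plan provides it.
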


Theorem~\ref{thm:prod_finite} applies to all finitely generated abelian groups.

\subsection*{Acknowledgements}
We thank Macarena Arenas for explanations about $\CAT$ cube complexes and Matthew Zaremsky for helpful discussions.
The second author is grateful for the financial support of the DGAPA-UNAM grant PAPIIT~IN102426.

\section{Rips complexes of metric gluings}

In this section, we recall a criterion for the compatibility of Rips complexes with metric gluings (Theorem~\ref{thm:gluing}) by Adamaszek et al.~\cite{Adamaszek_etal}.

\begin{lem}
\label{lem:exhaustion}
	Let~$X$ be a metric space and let $X_0\subset X_1\subset \cdots\subset X$ be isometric subspaces of~$X$ with~$\bigcup_{i\in \IN} X_i=X$.
	Let~$r\in \IR_{\ge 0}$.
	Suppose that for all~$i\in \IN$, the Rips complex~$R_r(X_i)$ is contractible.
	Then~$R_r(X)$ is contractible.
	\begin{proof}
		By the Whitehead theorem, it suffices to show that all homotopy groups of~$R_r(X)$ are trivial.
		Since spheres are compact, every continuous map from a sphere to~$R_r(X)$ factors through a finite subcomplex of~$R_r(X)$ and hence through~$R_r(X_i)$ for some~$i\in \IN$, which is contractible by assumption.
	\end{proof}
\end{lem}

For a simplicial complex~$K$ with vertex set~$V$ and a subset~$V_1$ of~$V$, we denote by~$K[V_1]$ the full subcomplex (also called the induced subcomplex) of~$K$ on the vertex set~$V_1$.

\begin{lem}[{\cite[Corollary~1]{Adamaszek_etal}}]
\label{lem:Whitehead}
	Let~$K$ be a simplicial complex with vertex set~$V$ and let~$L$ be a subcomplex of~$K$ with the same vertex set~$V$.
	Suppose that for every finite subset~$V_0\subset V$, there exists a subset~$V_1\subset V$ with~$V_0\subset V_1$ such that the inclusion~$L[V_1]\to K[V_1]$ is a homotopy equivalence.
	Then the inclusion~$L\to K$ is a homotopy equivalence.
\end{lem}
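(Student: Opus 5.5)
The plan is to show that every homotopy group of $K$ is trivial relative to $L$, and then apply the Whitehead theorem. By the Whitehead theorem, the inclusion $L\to K$ of CW-complexes is a homotopy equivalence as soon as it induces isomorphisms on all homotopy groups for all basepoints. Since $L$ and $K$ have the same vertex set, every path component of $K$ meets $L$. So it suffices to show that all relative homotopy groups $\pi_n(K,L,v)$ vanish for every vertex $v$ and every $n\ge 1$, together with injectivity on $\pi_0$. Equivalently, we show that every map of pairs $(D^n,S^{n-1})\to(K,L)$ is homotopic rel $S^{n-1}$, through maps of pairs, into $L$.

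First, fix such a map $f\colon (D^n,S^{n-1})\to (K,L)$. Since $D^n$ is compact, its image lies in a finite subcomplex of $K$. Let $V_0$ be the finite set of vertices of that subcomplex. Then $f$ factors through $K[V_0]$, and its restriction to $S^{n-1}$ lands in $L\cap K[V_0]=L[V_0]$. By hypothesis there is a subset $V_1\supset V_0$ such that $L[V_1]\to K[V_1]$ is a homotopy equivalence. Since $K[V_0]\subset K[V_1]$ and $L[V_0]\subset L[V_1]$, the map $f$ defines an element of $\pi_n(K[V_1],L[V_1])$. This group vanishes by the long exact sequence of the pair, together with the five lemma applied to the inclusion being a weak equivalence.

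So $f$ is homotopic rel boundary, within $K[V_1]\subset K$, to a map into $L[V_1]\subset L$. Hence $\pi_n(K,L,v)=0$. The case $\pi_0$ is handled the same way. If two vertices of $L$ are joined by a path in $K$, that path lies in some finite $K[V_0]$. They are then joined in $K[V_1]$, hence in $L[V_1]$, because $\pi_0(L[V_1])\to \pi_0(K[V_1])$ is bijective.

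The only step needing care is that the relevant finite subcomplexes are full, so that the boundary really lands in $L[V_1]$. This holds because $L[V_1]$ contains every simplex of $L$ with vertices in $V_1$. The long exact sequence of the pair $(K,L)$ then gives that $L\to K$ is a weak equivalence, hence a homotopy equivalence by the Whitehead theorem.
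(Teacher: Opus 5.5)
Your proof is correct and follows the argument the paper has in mind when it calls the lemma a consequence of the Whitehead theorem: compactness places each relative sphere in some full subcomplex $K[V_0]$, the hypothesis compresses it inside $K[V_1]$, and, as the paper remarks, finiteness of $V_1$ is never used. The one loose phrase is the appeal to the five lemma, since the vanishing of $\pi_n(K[V_1],L[V_1])$ for a weak equivalence follows directly from exactness of the long exact sequence, with a short direct check for $n=1$, where the relative term is only a pointed set.
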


Lemma~\ref{lem:Whitehead} is also a consequence of the Whitehead theorem.
In~\cite[Corollary~1]{Adamaszek_etal}, the subset~$V_1\subset V$ is required to be finite but this is not needed in the proof.

\begin{defn}
	Let~$(X,d_X)$ and~$(Y,d_Y)$ be metric spaces and let~$A$ be a closed isometric subspace of both~$X$ and~$Y$.
	The \emph{metric gluing of~$X$ and~$Y$ along~$A$} is the gluing of sets~$X\cup_A Y$ equipped with the metric~$d\colon (X\cup_A Y)\times (X\cup_A Y)\to \IR$, for~$v,w\in X\cup_A Y$ given by
	\[
		d(v,w)\coloneqq \begin{cases}
			d_X(v,w) & \text{if } v,w\in X; \\
			d_Y(v,w) & \text{if } v,w\in Y; \\
			\inf\{d_X(v,a)+d_Y(a,w)\mid a\in A\} & \text{if } v\in X, w\in Y.
		\end{cases}
	\]
	The function~$d$ is indeed a metric on~$X\cup_A Y$ since~$A$ is closed in both~$X$ and~$Y$~\cite[Lemma~I.5.24]{Bridson-Haefliger}. 
\end{defn}

The isometric inclusions of~$X$, $Y$, and~$A$ into~$X\cup_A Y$ induce inclusions of Rips complexes. 
These yield an inclusion of the simplicial gluing of Rips complexes into the Rips complex of the metric gluing
\[
	\iota\colon R_r(X)\cup_{R_r(A)} R_r(Y)\to R_r(X\cup_A Y).
\]
The following result of Adamaszek et al.~\cite{Adamaszek_etal} is a criterion ensuring that the map~$\iota$ is a homotopy equivalence.
Similar gluing criteria were obtained in~\cite{CJST,Virk}.

\begin{thm}[{\cite[Theorem~1]{Adamaszek_etal}}]
\label{thm:gluing}
	Let~$X\cup_A Y$ be a metric gluing, where~$A$ is a proper metric space, and let~$r\in \IR_{\ge 0}$.
	Suppose that for all non-empty finite subsets~$U_X\subset X\setminus A$ and~$U_Y\subset Y\setminus A$ with~$\diam(U_X\cup U_Y)\le r$, there exists a unique maximal non-empty finite subset~$\sigma\subset A$ such that~$\diam(U_X\cup \sigma\cup U_Y)\le r$.
	Then the inclusion
	\[
		R_r(X)\cup_{R_r(A)} R_r(Y)\to R_r(X\cup_A Y)
	\]
	is a homotopy equivalence.
	\begin{proof}
		(Sketch).
		For the proof in the case that~$X$ and~$Y$, and hence also~$A$, are finite metric spaces, we refer to~\cite[Theorem~1]{Adamaszek_etal}.
		The argument is a sequence of simplicial collapses, where the unique maximality of~$\sigma\subset A$ is used to obtain an order in which to perform the simplicial collapses. 
		We provide a few more details on the reduction from infinite metric spaces to finite metric spaces.
		
		We will apply Lemma~\ref{lem:Whitehead} to the simplicial complex~$K=R_r(X\cup_A Y)$ and the subcomplex~$L=R_r(X)\cup_{R_r(A)} R_r(Y)$.
		Let~$V_0$ be a finite subset of~$X\cup_A Y$.
		By assumption, for all non-empty finite subsets~$U_X\subset V_0\cap (X\setminus A)$ and~$U_Y\subset V_0\cap (Y\setminus A)$ with~$\diam(U_X\cup U_Y)\le r$, there exists a unique maximal non-empty finite subset~$\sigma=\sigma(U_X,U_Y)\subset A$ such that~$\diam(U_X\cup \sigma\cup U_Y)\le r$.
		Moreover, since~$A$ is proper, for all~$x\in V_0\cap X$ and~$y\in V_0\cap Y$, there exists~$a=a(x,y)\in A$ with~$d(x,y)=d_X(x,a)+d_Y(a,y)$~\cite[Lemma~I.5.24]{Bridson-Haefliger}.	
		Let~$V_1\subset X\cup_A Y$ be the union of~$V_0$, all the subsets~$(\sigma(U_X,U_Y))_{U_X,U_Y}$ as above, and all the elements~$(a(x,y))_{x,y}$ as above.
		Then~$V_1$, equipped with the restricted metric from~$X\cup_A Y$, can be identified with the metric gluing~$(V_1\cap X)\cup_{V_1\cap A} (V_1\cap Y)$ and satisfies the assumption of Theorem~\ref{thm:gluing}.
		Since~$V_1$ is finite, Theorem~\ref{thm:gluing} for finite metric spaces yields that the inclusion
		\[
			L[V_1]=R_r(V_1\cap X)\cup_{R_r(V_1\cap A)} R_r(V_1\cap Y)\to K[V_1]=R_r(V_1)
		\]
		is a homotopy equivalence.
		Then Lemma~\ref{lem:Whitehead} implies that the inclusion~$L\to K$ is a homotopy equivalence.
	\end{proof}
\end{thm}

In Theorem~\ref{thm:gluing}, we added the assumptions that~$A$ is proper and that~$\sigma$ is finite which are not present in~\cite[Theorem~1]{Adamaszek_etal}.

We are mainly interested in metric spaces that are vertex sets of locally finite graphs (with edges of length~$1$) arising as Cayley graphs of finitely generated groups.
The assumption of Theorem~\ref{thm:gluing} is satisfied in the case of a star-shaped gluing locus.

\begin{cor}
\label{cor:gluing_star}
	Let~$X\cup_A Y$ be a metric gluing of vertex sets of locally finite graphs and let~$r\in \IR_{\ge 2}$.
	Suppose that there exists~$a_0\in A$ such that for all~$a\in A$, we have~$d(a,a_0)\le 1$.
	Then the inclusion
	\[
		R_r(X)\cup_{R_r(A)} R_r(Y)\to R_r(X\cup_A Y)
	\]
	is a homotopy equivalence.
	\begin{proof}
		We verify the assumption of Theorem~\ref{thm:gluing}.
		Let~$U_X\subset X\setminus A$ and~$U_Y\subset Y\setminus A$ be non-empty finite subsets with~$\diam(U_X\cup U_Y)\le r$.
		Take~$\sigma\subset A$ to be the set of all~$a\in A$ such that~$\diam(U_X\cup \{a\}\cup U_Y)\le r$.
		Since~$\diam(A)\le 2\le r$, we have $\diam(U_X\cup \sigma\cup U_Y)\le r$ and~$\sigma\subset A$ is unique maximal with this property.
		Since~$A$ has finite diameter and is locally finite, $A$ is finite and hence also~$\sigma$ is finite.
		It remains to show that~$\sigma$ is non-empty.
		Since~$A$ is proper, for all~$x\in U_X$ and~$y\in U_Y$, there exists~$a\in A$ with~$d(x,y)=d_X(x,a)+d_Y(a,y)$~\cite[Lemma~I.5.24]{Bridson-Haefliger}.
		In particular, $d_X(x,a)\le r-1$ and hence, for $a_0\in A$ given in the assumption of Corollary~\ref{cor:gluing_star},
		\[
			d_X(x,a_0)\le d_X(x,a)+d_X(a,a_0)\le r-1+1=r.
		\]
		Symmetrically, we obtain $d_Y(a_0,y)\le r$.
		Together, $\diam(U_X\cup \{a_0\}\cup U_Y)\le r$ and thus~$a_0\in \sigma$.
	\end{proof}
\end{cor}

One can apply Corollary~\ref{cor:gluing_star} inductively to finite successive gluings.
We will also want to consider infinite successive gluings of metric spaces~\cite[I.5.26]{Bridson-Haefliger}.

\begin{defn}
\label{defn:succ_gluing}
	Let~$(X_i)_{i\in \IN}$ be a sequence of metric spaces.
	For~$i=0,1,2,\ldots$, let~~$A_i$ be a closed isometric subspace of both~$Y_i$ and~$X_{i+1}$, where~$Y_0\coloneqq X_0$ and~$Y_{i+1}$ is the metric gluing~$Y_i\cup_{A_i} X_{i+1}$.
	The \emph{successive metric gluing~$\bigcup_{A_*} X_*$ of~$(X_i)_i$ along~$(A_i)_i$} is the set~$Y_\infty\coloneqq \operatorname{colim}_i Y_i$ equipped with the metric~$d\colon Y_\infty\times Y_\infty\to \IR$, for~$v,w\in Y_\infty$ given by $d(v,w)\coloneqq d_{Y_{n}}(v,w)$ if~$v,w\in Y_n$.
	The function~$d$ is well-defined since~$Y_i$ is an isometric subspace of~$Y_{i+1}$ for every~$i\in \IN$.
\end{defn}

\begin{prop}
\label{prop:succ_gluing_star}
	Let~$\bigcup_{A_*} X_*$ be a successive gluing of vertex sets of locally finite graphs and let~$r\in \IR_{\ge 2}$.
	Suppose that the following hold for every~$i\in \IN$:
	\begin{enumerate}[label=\enum]
		\item There exists~$a_i\in A_i$ such that for all~$a\in A_i$, we have~$d(a,a_i)\le 1$;
		\item The Rips complex~$R_r(X_i)$ is contractible.
	\end{enumerate}
	Then~$R_r(\bigcup_{A_*} X_*)$ is contractible.
	\begin{proof}
		Let~$Y_0\coloneqq X_0$ and~$Y_{i+1}\coloneqq Y_i\cup_{A_i} X_{i+1}$ as in Definition~\ref{defn:succ_gluing}.
		In order to show that~$R_r(\bigcup_{A_*} X_*)$ is contractible, by Lemma~\ref{lem:exhaustion} it suffices to show for every~$i\in \IN$ that~$R_r(Y_i)$ is contractible.
		We proceed by induction on~$i$.
		The Rips complex~$R_r(Y_0)$ is contractible by assumption~(ii).
		By assumption~(i), Corollary~\ref{cor:gluing_star} applies to the metric gluing~$Y_{i+1}=Y_i\cup_{A_i} X_{i+1}$ showing that~$R_r(Y_{i+1})$ is homotopy equivalent to~$R_r(Y_i)\cup_{R_r(A_i)} R_r(X_{i+1})$.
		The Rips complexes~$R_r(Y_i)$, $R_r(X_{i+1})$, and~$R_r(A_i)$ are contractible by induction hypothesis, assumption~(ii), and assumption~(i), respectively.
		Hence the simplicial gluing of Rips complexes~$R_r(Y_i)\cup_{R_r(A_i)} R_r(X_{i+1})$ is contractible and thus also~$R_r(Y_{i+1})$ is contractible.
	\end{proof}
\end{prop}

\section{Applications}

\subsection{Graphs of groups with finite edge groups}

We prove that amalgamated products and HNN-extensions over finite groups preserve the property type~$\sfR$ (Definition~\ref{defn:type_R}).
The proofs are inspired by~\cite{Jakus13}.
By abuse of notation, we will also use~$(G,S)$ to refer to the metric space~$(G,d_S)$, where~$d_S$ is the word metric associated to~$S$.

\begin{proof}[Proof of Theorem~\ref{thm:amalg}]
	We will apply Proposition~\ref{prop:succ_gluing_star} to the amalgamated product~$(A\ast_C B,S_A\cup S_B)$ which is a successive metric gluing of copies of~$(A,S_A)$ and~$(B,S_B)$ along copies of~$(C,S_C)$~\cite[p.~280]{Cannon}, see also~\cite[Section~6]{Jakus13}.
	Note that the metric space~$(C,S_C)$ is indeed an isometric subspace of~$(A,S_A)$ and~$(B,S_B)$ because~$(C,S_C)$ is a finite simplex, as~$C$ is finite and~$S_C$ is the set of all non-trivial elements of~$C$.
	For the same reason, assumption~(i) of Proposition~\ref{prop:succ_gluing_star} is satisfied.
	Since~$(A,S_A)$ is of type~$\sfR$, there exists~$r_A\in \IR_{\ge 0}$ such that for all~$r\ge r_A$, the Rips complex~$R_r(A,S_A)$ is contractible.
	Similarly, there exists~$r_B\in \IR_{\ge 0}$.
	Let~$r_0\coloneqq \max\{r_A,r_B,2\}$.
	Then, for every~$r\ge r_0$, assumption~(ii) of Proposition~\ref{prop:succ_gluing_star} is satisfied.
	Hence, Proposition~\ref{prop:succ_gluing_star} yields that for every~$r\ge r_0$, the Rips complex~$R_r(A\ast_C B,S_A\cup S_B)$ is contractible.
	Thus~$(A\ast_C B,S_A\cup S_B)$ is of type~$\sfR$.
\end{proof}

We record the special case of Theorem~\ref{thm:amalg} when~$C$ is the trivial group.

\begin{cor}
\label{cor:free_prod}
	Let~$(A,S_A)$ and~$(B,S_B)$ be marked groups of type~$\sfR$.
	Then the free product~$(A\ast B,S_A\cup S_B)$ is of type~$\sfR$.
\end{cor}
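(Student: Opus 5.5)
This is the special case $C=\{1\}$ of Theorem~\ref{thm:amalg}, so the plan is to check that its hypotheses hold in this situation and then quote it. For the trivial group $C$, the set $S_C$ of non-trivial elements of $C$ is empty. The containments $S_C\subset S_A$ and $S_C\subset S_B$ therefore hold for any generating sets, so no modification of $S_A$ or $S_B$ is needed. Moreover $A\ast_C B=A\ast B$ with generating set $S_A\cup S_B$.

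The finiteness of $C$ is trivial. The remaining hypothesis is that $(A,S_A)$ and $(B,S_B)$ are of type~$\sfR$, which is assumed. Theorem~\ref{thm:amalg} then gives that $(A\ast B,S_A\cup S_B)$ is of type~$\sfR$. Explicitly, $R_r(A\ast B,S_A\cup S_B)$ is contractible for all $r\ge \max\{r_A,r_B,2\}$.

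It is worth checking that nothing in the proof of Theorem~\ref{thm:amalg} degenerates in this case. The gluing loci $A_i$ are single points, namely copies of the identity. Assumption~(i) of Proposition~\ref{prop:succ_gluing_star} therefore holds with $a_i$ equal to that point. A point is a closed isometric subspace, and it is proper.

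There is also a potential issue in Corollary~\ref{cor:gluing_star}. That argument needs a point $a\in A$ realizing $d(x,y)$, and when the locus is a single point this is immediate. The Rips complex of a point is contractible, so the induction goes through unchanged.

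I do not expect a genuine obstacle. The only point needing care is the description of the Cayley graph of $A\ast B$ as a successive metric gluing of copies of the Cayley graphs of $A$ and $B$ along single vertices, and that is the standard tree-of-spaces picture already used in Theorem~\ref{thm:amalg}.
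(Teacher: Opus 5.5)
Your proposal is correct and matches the paper, which records this corollary as the special case $C=\{1\}$ of Theorem~\ref{thm:amalg}: there $S_C=\emptyset$, so the containment hypotheses are vacuous, and the threshold is $\max\{r_A,r_B,2\}$. Your extra checks that single-point gluing loci cause no degeneration are accurate, but quoting the theorem already suffices.
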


\begin{proof}[Proof of Theorem~\ref{thm:HNN}]
	The proof is analogous to the proof of Theorem~\ref{thm:amalg} using that the HNN-extension~$(A\ast_{C,\varphi},S_A\cup \{t\})$ is a successive metric gluing of copies of~$(A,S_A)$ and~$(C\times \IZ/2\IZ\langle t\rangle,S_C\cup \{t\})$ along copies of~$(C,S_C)$~\cite[p.~281]{Cannon}, see also~\cite[Section~7]{Jakus13}.
	Here the metric space~$C\times \IZ/2\IZ\langle t\rangle$ consists of two copies $C\times \{0\}$ and~$C\times \{t\}$ of~$C$, where~$(c,0)$ has distance~$1$ to~$(c,t)$ for every~$c\in C$.
\end{proof}

\subsection{Two-dimensional right-angled Artin groups}

We prove that 2-dimen\-sional right-angled Artin groups with the standard generating set are of type~$\sfR$.

A \emph{cube complex} is a space obtained by gluing cubes (of possibly different dimensions) along faces and it is equipped with the metric induced by the $\ell^2$-metric on every cube.
By Gromov's link criterion, a simply-connected cube complex is $\CAT$ if and only if all vertex links are flag simplicial complexes.
We will consider the vertex set of a cube complex equipped with the $\ell^1$-metric (i.e., the graph metric on the 1-skeleton).
A \emph{square complex} is a cube complex of dimension~$\le 2$.

\begin{prop}
\label{prop:CAT}
	Let~$V$ be the vertex set of a locally finite $\CAT$ square complex equipped with the $\ell^1$-metric.
	Then, for all~$r\in \IR_{\ge 2}$, the Rips complex~$R_r(V)$ is contractible.
	\begin{proof}
		Let~$X$ be a locally finite $\CAT$ square complex with vertex set~$V$.
		Then~$X$ can be exhausted by finite $\CAT$ square subcomplexes~$(X_i)_{i\in \IN}$.
		Indeed, fix a vertex~$v_0$ of~$X$ and let~$X_i$ be the combinatorial convex hull of all vertices at distance~$\le i$ to~$v_0$. 
		Since~$X_i$ is combinatorially convex, it is also convex with respect to the $\CAT$ metric~\cite[Theorem~2.13]{Haglund} and hence~$X_i$ is $\CAT$. 
		Moreover, $X_i$ is a finite square complex because~$X$ is locally finite.
		By Lemma~\ref{lem:exhaustion}, it suffices to show for every~$i\in \IN$ that~$R_r(X_i^{(0)})$ is contractible.
		
		We may assume that~$X$ is a finite $\CAT$ square complex with vertex set~$V$.
		We proceed by induction on the number of vertices.
		If~$X$ is a single vertex, then the claim is true.
		Since the $\CAT$ square complex~$X$ is finite, there exists a free vertex~$v$, i.e., the vertex~$v$ is contained in a unique maximal cube~$C$, see e.g.~\cite[Proposition~2.4.11]{Rowlands_thesis}.
		Let~$X[V\setminus \{v\}]$ denote the full square subcomplex of~$X$ on the vertex set~$V\setminus \{v\}$.
		Then~$X$ is the gluing of~$X[V\setminus \{v\}]$ and~$C$ along~$A$, where~$A$ is a vertex (if~$C$ is an edge) or a path of length~$2$ (if~$C$ is a square).
		Hence~$V$ is the metric gluing of~$V\setminus \{v\}$ and~$C^{(0)}$ along~$A^{(0)}$, where all spaces are equipped with the $\ell^1$-metric.
		For both possibilities of~$A$, the vertex set~$A^{(0)}$ satisfies the assumption of Corollary~\ref{cor:gluing_star} which yields that the inclusion
		\[
			R_r(V\setminus \{v\})\cup_{R_r(A^{(0)})} R_r(C^{(0)})\to R_r(V)
		\]
		is a homotopy equivalence.
		Since~$C^{(0)}$ and~$A^{(0)}$ have diameter~$\le 2$, their respective Rips complexes at scale~$r\ge 2$ are contractible.
		The square complex~$X[V\setminus \{v\}]$ is $\CAT$, as it is simply-connected and its vertex links are flag, and has strictly fewer vertices than~$X$.
		By induction hypothesis, the Rips complex~$R_r(V\setminus v)$ is contractible. 
		Thus~$R_r(V)$ is contractible.
	\end{proof}
\end{prop}

We encourage the reader to visualise the proof of Proposition~\ref{prop:CAT} when~$V$ is the vertex set of~$\IR$, the 4-regular tree~$T_4$, $\IR\times \IR$, or~$T_4\times \IR$ with the obvious square complex structures.

\begin{proof}[Proof of Theorem~\ref{thm:RAAG}]
	The Salvetti complex is a finite CW-model for~$B(A_\Gamma)$ of dimension equal to the clique number of~$\Gamma$, which is at most~$2$ because~$\Gamma$ has no triangles.
	The universal covering~$X$ of the Salvetti complex is a locally finite $\CAT$ square complex~\cite[Theorem~3.6]{Charney_survey}.
	The vertex set of~$X$ equipped with the $\ell^1$-metric can be identified with~$A_\Gamma$ equipped with the word metric associated to~$\Gamma^{(0)}$.
	Hence the claim follows from Proposition~\ref{prop:CAT}.
\end{proof}

We record the special case of Theorem~\ref{thm:RAAG} when~$\Gamma$ is a complete bipartite graph.

\begin{ex}
\label{ex:bipartite}
	Let~$F_k$ and~$F_l$ be finitely generated free groups of rank~$k$ and~$l$, respectively.
	Then the direct product~$F_k\times F_l$ with the standard generating set is of type~$\sfR$.
\end{ex}

Proposition~\ref{prop:CAT} applies also to the infinite dihedral group and the Klein bottle group with suitable finite generating sets.

\section{Products of free abelian groups and finite groups}

In this section, we observe that direct products of the form~$\IZ^n\times F$, where~$F$ is a finite group, are of type~$\sfR$.
This follows by inspecting Zaremsky's proof for~$\IZ^n$ via a Morse theoretic criterion for the contractibility of Rips complexes~\cite{Zaremsky26}. 

For a subset~$U$ of a metric space~$X$ and~$\alpha\in \IR_{\ge 0}$, we denote by~$Z(U,\alpha)$ the set of elements~$z\in X$ such that~$U$ is contained in the ball of radius~$\alpha$ centred at~$z$.
A graph is uniformly locally finite if the degree of all vertices is bounded by a uniform constant.

\begin{thm}[{\cite[Theorem~3.1]{Zaremsky26}}]
\label{thm:Morse}
	Let~$X$ be the vertex set of a uniformly locally finite graph and let~$r_0\in \IR_{\ge 0}$.
	Suppose that for all~$r>r_0$, there exists~$\alpha_r<r$ such that the following holds:
	For every subset~$U\subset X$ with~$\diam(U)=r$, there exists~$z_0\in Z(U,\alpha_r)$ such that for every~$z\in Z(U,\alpha_r)$, we have~$d(z_0,z)\le r$.
	Then, for every~$r\ge r_0$, the Rips complex~$R_r(X)$ is contractible. 
\end{thm}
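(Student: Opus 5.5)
This is Zaremsky's Morse theoretic criterion, so my plan is to reconstruct its discrete Morse theory proof. I would present the Rips complex at scale $r$ as obtained from $R_{r_0}(X)$ by adding simplices in order of increasing diameter. I would check that, at each stage, the descending links of the newly added simplices are contractible.

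First, I fix the filtration. Since $X$ is the vertex set of a graph with the path metric, all distances are integers. So $R_r(X)=R_{\lfloor r\rfloor}(X)$, and the only changes happen at integer scales. It therefore suffices to show that the inclusion $R_{r-1}(X)\to R_r(X)$ is a homotopy equivalence for every integer $r>r_0$. I would also show that $R_{r_0}(X)$ is contractible, via the base case or a limit argument; I come back to this below.

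Next, I treat the passage from $R_{r-1}(X)$ to $R_r(X)$. The simplices added are exactly the finite sets $U$ with $\diam(U)=r$. I would add them in order of decreasing cardinality, or via a Morse function built from the diameter. Each such $U$ is attached along a relative link: a set $W$ appears in it if $U\cup W$ has diameter $\le r$, subject to restrictions ensuring $W$ was added earlier. The key claim is that this descending link is contractible. It contains the set of points $z$ at distance at most $\alpha_r<r$ from all of $U$, which is exactly $Z(U,\alpha_r)$, together with further points. The hypothesis gives $z_0\in Z(U,\alpha_r)$ within distance $r$ of every $z\in Z(U,\alpha_r)$. I would use $z_0$ as a cone point: $\{z_0\}\cup Z(U,\alpha_r)\cup U$ has diameter $\le r$ as long as the pairwise distances within $Z(U,\alpha_r)$ are controlled. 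Then a cone-off or Quillen fiber style argument (the descending link deformation retracts onto a cone with apex $z_0$) gives contractibility. Uniform local finiteness ensures these sets are finite, so the Morse-theoretic bookkeeping is legitimate and simplices of a given cardinality are finite.

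The main obstacle is setting up the Morse function so that the descending link is genuinely a cone over $z_0$. I would try a lexicographic order on simplices: first by diameter, then by the sum of distances to some auxiliary point, or by the number of ``far'' vertices. The descending link of $U$ would then be the full subcomplex on the vertices $z$ with $\max_{u\in U} d(z,u)\le\alpha_r$. The strict inequality $\alpha_r<r$ is what guarantees that adding such $z$ does not increase the diameter. Pairwise bounds among the $z$'s come from the hypothesis, via $d(z_0,z)\le r$.

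Finally, I would argue that $R_r(X)$ is contractible for large $r$, where the finite-dimensional case and local finiteness help. One then descends step by step to $r_0$. Alternatively, I would run the induction upward starting from $R_{r_0}$ and invoke Lemma~\ref{lem:exhaustion} on finite pieces. I would rely on Lemma~\ref{lem:Whitehead} to reduce to finite full subcomplexes wherever infinitely many simplices must be handled at once.
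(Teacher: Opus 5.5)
The paper does not prove this statement; it only quotes it from Zaremsky. So I am judging your sketch on its own terms, and it has a genuine gap exactly where you flag ``the main obstacle''.

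Take the ordering you propose first: diameter, then decreasing cardinality, i.e.\ the Morse function $\sigma\mapsto(\diam\sigma,-|\sigma|)$ on the barycentric subdivision. For a simplex $U$ with $\diam U=r$, the descending link is the join of two pieces. The face part is $\{\tau\subsetneq U\mid \diam\tau<r\}$. The coface part is (the subdivision of) the link of $U$ in $R_r(X)$, i.e.\ the full subcomplex on $Z(U,r)\setminus U$, not on $Z(U,\alpha_r)$.

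You ignore the face part, and you never produce an ordering that cuts the coface part down to $Z(U,\alpha_r)$. The hypothesis says nothing about points of $Z(U,r)\setminus Z(U,\alpha_r)$. So $z_0$ need not be adjacent to them and is not a cone point. This weak control is the whole point of the $\alpha_r$-formulation: for $\IZ^n$ one only has $d(z_0,z)\le\beta+n/2$ for $z\in Z(U,\beta)$.

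Your account of the mechanism is also off in two places.
\begin{itemize}
\item Pairwise distances in $Z(U,\alpha_r)$ are not controlled, and they need not be. Since Rips complexes are flag, $z_0$ is a cone point of the full subcomplex on $Z(U,\alpha_r)$ as soon as $d(z_0,z)\le r$ for all $z$.
\item The inequality $\alpha_r<r$ is not what keeps diameters $\le r$ when you add $z$; $\alpha_r\le r$ would suffice for that. It is needed for the face part. If $U$ contains some $x\in Z(U,\alpha_r)$ (which is allowed, e.g.\ $z_0\in U$), then $\diam(\tau\cup\{x\})\le\max(\diam\tau,\alpha_r)<r$. So $\tau\mapsto\tau\cup\{x\}$ contracts the face part.
\end{itemize}

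The missing idea concerns the remaining case $U\cap Z(U,\alpha_r)=\emptyset$, where $z_0\notin U$. There you must show that the full subcomplex on $Z(U,r)\setminus U$ is contractible, and this needs the hypothesis for enlarged sets, not just for $U$.

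Start with the full subcomplex on $Z(U,\alpha_r)$, which is a cone on $z_0$. Then add the remaining vertices $y$ in order of increasing $\max_{u\in U}d(u,y)$, breaking ties arbitrarily. When $y$ is added, its link is the full subcomplex on a set sandwiched between $Z(U\cup\{y\},\alpha_r)\setminus(U\cup\{y\})$ and $Z(U\cup\{y\},r)\setminus(U\cup\{y\})$. Moreover, $U\cup\{y\}$ again has diameter $r$ and contains no point of $Z(U\cup\{y\},\alpha_r)$. A downward induction on $|U|$ therefore closes the argument, applied to all such sandwiched vertex sets. The induction terminates because $|U|$ is bounded, by uniform local finiteness.

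Your endgame also needs repair. No finite-scale $R_r(X)$ is known a priori to be contractible, so you cannot descend from ``large $r$''. Lemma~\ref{lem:exhaustion} exhausts $X$ at a fixed scale, which does not help here. Instead, use that $\bigcup_r R_r(X)$ is the full simplex on $X$. Every sphere in $R_{r_0}(X)$ lies in a finite subcomplex, hence is null-homotopic in some $R_r(X)$. The inclusions $R_{r-1}(X)\to R_r(X)$ for integers $r>r_0$ are homotopy equivalences, so the sphere is already null-homotopic in $R_{r_0}(X)$. Whitehead's theorem then gives contractibility.
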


Zaremsky showed that~$\IZ^n$ satisfies the assumption of Theorem~\ref{thm:Morse} in a strong quantitative way.

\begin{thm}[{\cite[Proof of Corollary~3.2]{Zaremsky26}}]
\label{thm:Morse_Zn}
	Let~$n\in \IN$ and let~$\IZ^n$ be equipped with the standard generating set.
	For~$r\in \IR_{\ge 0}$, let
	\[
		\beta_r\coloneqq rn/(n+1)+n/2.
	\]
	Let~$U\subset \IZ^n$ with~$\diam(U)=r$.
	Then there exists~$z_0\in Z(U,\beta_r)$ such that for every~$\beta\in \IR_{\ge 0}$ and every~$z\in Z(U,\beta)$, we have~$d(z_0,z)\le \beta+n/2$.
	
	In particular, $\IZ^n$ satisfies the assumption of Theorem~\ref{thm:Morse} for~$r_0=n^2+n-1$.
\end{thm}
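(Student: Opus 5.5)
The plan is to work in $\IR^n$ with the $\ell^1$-norm, which restricts to the word metric on $\IZ^n$ for the standard generating set. First I would find a real centre $c\in\IR^n$ that lies in the convex hull of~$U$ and satisfies $\|c-u\|_1\le rn/(n+1)$ for all $u\in U$. Then I would round $c$ coordinatewise to a nearest lattice point~$z_0$, which costs $\|z_0-c\|_1\le n/2$. Note that $U$ is finite, since it is a bounded subset of the locally finite graph $\IZ^n$.

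\emph{Step 1 (the second property from convexity).} Suppose $c=\sum_u\lambda_u u$ is a convex combination of the points of~$U$. Let $z\in Z(U,\beta)$. Convexity of the norm gives
\[
	\|c-z\|_1\le\sum_u\lambda_u\|u-z\|_1\le\beta .
\]
Hence $d(z_0,z)\le\|z_0-c\|_1+\|c-z\|_1\le\beta+n/2$ for every $\beta\ge 0$ and every $z\in Z(U,\beta)$. This is exactly the second assertion, and it holds independently of how $c$ is chosen inside $\operatorname{conv}(U)$.

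\emph{Step 2 (a Jung-type centre inside the hull).} This is the main step. For $u\in U$, let $K_u$ be the closed $\ell^1$-ball of radius $\rho\coloneqq rn/(n+1)$ centred at~$u$, and let $K\coloneqq\operatorname{conv}(U)$. These are finitely many compact convex subsets of $\IR^n$, so by Helly's theorem it suffices to show that any $n+1$ of them intersect.

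\begin{itemize}
	\item Suppose the chosen sets are $K_{u_0},\dots,K_{u_n}$. Their barycenter $c'=\frac1{n+1}\sum_j u_j$ lies in~$K$. It satisfies $\|c'-u_i\|_1\le\frac1{n+1}\sum_{j\ne i}\|u_j-u_i\|_1\le\frac{n}{n+1}r$, so $c'$ lies in all of them.
	\item Suppose the chosen sets are $K$ and $K_{u_1},\dots,K_{u_n}$. The barycenter of $u_1,\dots,u_n$ lies in~$K$ and is within $\frac{n-1}{n}r\le\rho$ of each~$u_i$.
	\item Families with repeated points or fewer members are handled the same way.
\end{itemize}

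Helly's theorem then gives a point $c\in K\cap\bigcap_u K_u$. After rounding, $\|z_0-u\|_1\le\rho+n/2=\beta_r$ for all $u\in U$, so $z_0\in Z(U,\beta_r)$.

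\emph{Step 3 (the ``in particular'' clause).} The Cayley graph of $\IZ^n$ is uniformly locally finite, since every vertex has degree~$2n$. If $r$ is not an integer, no $U$ has $\diam(U)=r$, so the condition is vacuous. Now let $r>r_0=n^2+n-1$ be an integer, so $r\ge n(n+1)$, and set $\alpha_r\coloneqq\beta_r$.
\begin{itemize}
	\item The inequality $\beta_r+n/2\le r$ is equivalent to $rn/(n+1)+n\le r$, that is, to $r\ge n(n+1)$, which holds.
	\item This also gives $\alpha_r=\beta_r\le r-n/2<r$.
\end{itemize}
By Steps 1 and~2 there is $z_0\in Z(U,\alpha_r)$ with $d(z_0,z)\le\alpha_r+n/2\le r$ for every $z\in Z(U,\alpha_r)$, which is the hypothesis of Theorem~\ref{thm:Morse}.

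I expect Step~2 to be the only delicate point. The centre must lie in the convex hull and not merely in the intersection of the balls, because Step~1 needs that. Including $\operatorname{conv}(U)$ as an extra set in the Helly family is designed to handle this, at the cost of checking the mixed subfamilies as above.
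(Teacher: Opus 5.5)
Your proof is correct and follows the idea the paper points to via Zaremsky's argument. You pick a point $c$ in the convex hull of $U$ in $\IR^n$ within $rn/(n+1)$ of every point of $U$, round it to a lattice point $z_0$ at $\ell^1$-distance at most $n/2$, and use convexity of the norm to get $d(z_0,z)\le\beta+n/2$ for every $z\in Z(U,\beta)$. Your Helly argument, with $\operatorname{conv}(U)$ added to the family of balls, supplies the Jung-type centre inside the hull, and your check of the threshold $r\ge n(n+1)$ for the ``in particular'' clause is right (for $n\ge 1$).
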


In~\cite[Corollary~3.2]{Zaremsky26}, Theorem~\ref{thm:Morse_Zn} is proved for~$\beta=\beta_r$ but the same proof works for every~$\beta$.
The key is that~$z_0$ can be chosen close to a point in the convex hull of~$U$ in the ambient~$\IR^n$.
In the following, our main insight is that $r-\beta_r$ becomes arbitrarily large for large~$r$, which can be used to compensate for any constant.

\begin{proof}[Proof of Theorem~\ref{thm:prod_finite}]
	We first fix some notation.
	Let~$d_{\IZ^n}$, $d_F$, and~$d_{\IZ^n\times F}$ denote the respective word metrics on~$\IZ^n$, $F$, and~$\IZ^n\times F$.
	For~$g_1,g_2\in \IZ^n$ and~$f_1,f_2\in F$, we have
	\[
		d_{\IZ^n\times F}\bigl((g_1,f_1),(g_2,f_2)\bigr) = d_{\IZ^n}(g_1,g_2) + d_F(f_1,f_2).
	\]
	Let~$D$ be the diameter of~$F$ and let~$p\colon \IZ^n\times F\to \IZ^n$ be the projection onto the first factor.
	Then, for a subset~$U\subset \IZ^n\times F$, we have 
	\[
		\diam_{\IZ^n}(p(U))\le \diam_{\IZ^n\times F}(U) \le \diam_{\IZ^n}(p(U))+D.
	\]
	For~$r\in \IR_{\ge 0}$, let~$\beta_r\coloneqq rn/(n+1)+n/2$ as in Theorem~\ref{thm:Morse_Zn}.

	We verify the assumption of Theorem~\ref{thm:Morse}.
	Let~$r_0\in \IR_{\ge 0}$ be large enough such that~$\beta_{r_0}+n/2+2D\le r_0$.
	Then, for all~$r>r_0$, also~$\beta_r+n/2+2D\le r$.
	We set~$\alpha_r\coloneqq \beta_r+D$.
	Let~$r>r_0$ and let~$U\subset \IZ^n\times F$ be a subset with~$\diam_{\IZ^n\times F}(U)=r$.
	Then~$\overline{r}\coloneqq \diam_{\IZ^n}(p(U))\le r$.
	By Theorem~\ref{thm:Morse_Zn}, there exists~$z_0\in Z_{\IZ^n}(p(U),\beta_{\overline{r}})$ such that for every~$\beta\in \IR_{\ge 0}$ and every~$z\in Z_{\IZ^n}(p(U),\beta)$, we have~$d_{\IZ^n}(z_0,z)\le \beta+n/2$.
	Since~$z_0\in Z_{\IZ^n}(p(U),\beta_{\overline{r}})$, we have~$(z_0,e_F)\in Z_{\IZ^n\times F}(U,\beta_{\overline{r}}+D)$, where~$e_F\in F$ is the neutral element.
	Since~$\beta_r$ is monotonously increasing in~$r$, we have $\beta_{\overline{r}}+D\le \beta_r+D=\alpha_r$.
	Hence, $(z_0,e_F)\in Z_{\IZ^n\times F}(U,\alpha_r)$.
	
	Let~$(g,f)\in Z_{\IZ^n\times F}(U,\alpha_r)$.
	Then~$g\in Z_{\IZ^n}(p(U),\alpha_r)$.
	By the properties of~$z_0$, we have~$d_{\IZ^n\times F}(z_0,g)\le \alpha_r+n/2$.
	Then, $d_{\IZ^n\times F}((z_0,e_F),(g,f))\le \alpha_r+n/2+D\le r$.
	Hence Theorem~\ref{thm:Morse} applies and yields the claim.
\end{proof}

\bibliographystyle{alpha}
\bibliography{bib}

\setlength{\parindent}{0cm}

\end{document}